\documentclass{amsart}
\usepackage{amsmath,amssymb,mathtools}
\usepackage[hidelinks]{hyperref}
\usepackage{microtype}

\allowdisplaybreaks
\numberwithin{equation}{section}

\newtheorem{theorem}{Theorem}[section]
\newtheorem{proposition}[theorem]{Proposition}
\newtheorem{lemma}[theorem]{Lemma}
\newtheorem{corollary}[theorem]{Corollary}
\theoremstyle{remark}
\newtheorem{remark}[theorem]{Remark}

\newcommand{\Z}{\mathbb Z}
\newcommand{\R}{\mathbb R}
\newcommand{\C}{\mathbb C}
\newcommand{\E}{\mathcal E}

\newcommand{\dd}{\,\mathrm d}

\begin{document}

\title[THE SHARP DISCRETE HARDY INEQUALITY ON $\Z^3$]
{The sharp discrete Hardy inequality on $\Z^3$}

\author{Natanael Alpay}
\address{Department of Mathematics, University of California, Irvine, Irvine, CA 92697, USA}
\email{nalpay@uci.edu}

\subjclass[2020]{39A12, 26D15, 47B39}
\keywords{Discrete Hardy inequality, lattice Laplacian, sharp constant, Hardy weight, completion of squares}

\date{}
\dedicatory{}
\commby{}

\begin{abstract}
We determine the sharp constant in the nearest-neighbor Hardy inequality on
$\Z^3$ with the Euclidean inverse-square weight.  For every finitely supported
function $u:\Z^3\to\C$, we prove
\[
\sum_{x\in\Z^3}\sum_{j=1}^3
 |u(x+e_j)-u(x)|^2
\geq
\frac14\sum_{x\in\Z^3\setminus\{0\}}
 \frac{|u(x)|^2}{|x|^2}.
\]
The coefficient $1/4$ is sharp, and equality is not attained by a nonzero finitely supported function.
The proof uses an explicit reciprocal edge field and an edgewise completion of squares, together with a concavity argument for the associated vertex weight.
\end{abstract}

\maketitle

\section{Introduction}

The classical Hardy inequality on $\R^d$ states that
\begin{equation}
\int_{\R^d}|\nabla f(x)|^2\dd x
\geq
\frac{(d-2)^2}{4}
\int_{\R^d}\frac{|f(x)|^2}{|x|^2}\dd x,
\qquad d\geq3,
\label{eq:continuous-hardy}
\end{equation}
for $f\in C_c^\infty(\R^d\setminus\{0\})$.  The coefficient
$(d-2)^2/4$ is sharp and is suggested by the formal ground state
$|x|^{-(d-2)/2}$.  This function is not an extremizer, but suitable
truncations form a minimizing sequence.

We study the nearest-neighbor analogue of \eqref{eq:continuous-hardy}.  Let
$e_1,\ldots,e_d$ be the standard basis of $\Z^d$.  For a finitely supported
function $u:\Z^d\to\C$, set
\begin{equation}
\E_d(u)
=
\sum_{x\in\Z^d}\sum_{j=1}^d
 |u(x+e_j)-u(x)|^2.
\label{eq:energy-d}
\end{equation}
Every undirected nearest-neighbor edge is counted once in
\eqref{eq:energy-d}. 
Equivalently, if the positive lattice Laplacian is defined by
\[
(\mathcal L_d u)(x)
=
\sum_{j=1}^d
\bigl(2u(x)-u(x+e_j)-u(x-e_j)\bigr),
\]
then
\[
\E_d(u)
=
\langle u,\mathcal L_d u\rangle_{\ell^2(\Z^d)}.
\]
Thus $\E_d$ is the Dirichlet form of the standard positive lattice
Laplacian.
Let
\[
\mathcal D_0(\Z^d)
=
\{u\in C_c(\Z^d;\C):u(0)=0\}.
\]
With the Euclidean norm, define
\begin{equation}
\lambda_d
=
\inf_{\substack{u\in\mathcal D_0(\Z^d)\\u\neq0}}
\frac{\displaystyle \E_d(u)}
{\displaystyle \sum_{x\in\Z^d\setminus\{0\}}\frac{|u(x)|^2}{|x|^2}}.
\label{eq:lambda-d}
\end{equation}
Thus $\lambda_d$ is the largest coefficient for which
\begin{equation}
\E_d(u)
\geq
\lambda_d
\sum_{x\neq0}\frac{|u(x)|^2}{|x|^2}
\label{eq:discrete-hardy-d}
\end{equation}
holds for every $u\in\mathcal D_0(\Z^d)$.  The result below is proved in the
stronger form valid for every finitely supported function, without imposing
$u(0)=0$.\\

Hardy's original inequality was discrete and one-dimensional
\cite{Hardy1920}; see \cite{KufnerMaligrandaPersson2006} for its history.
Multidimensional lattice inequalities arose in the spectral theory of
discrete Schr\"odinger operators; see Rozenblum and Solomyak
\cite{RozenblumSolomyak2009} and Kapitanski and Laptev
\cite{KapitanskiLaptev2016}.  A related problem is to construct an optimal
spatially varying Hardy weight rather than the best constant multiplying the
fixed weight $|x|^{-2}$.  Keller, Pinchover and Pogorzelski developed a
criticality theory for optimal Hardy weights on graphs
\cite{KellerPinchoverPogorzelski2018}.  For the free Laplacian on $\Z^d$, they
constructed an optimal Hardy weight $W_{\mathrm{KPP}}$ satisfying
\begin{equation}
W_{\mathrm{KPP}}(x)
=
\frac{(d-2)^2}{4|x|^2}
+
O(|x|^{-3}),
\qquad |x|\to\infty;
\label{eq:KPP-asymptotic}
\end{equation}
see \cite[Theorem~7.2]{KellerPinchoverPogorzelski2018}.  Keller and Lemm
subsequently studied the robustness of inverse-square Hardy-weight behavior
for more general elliptic coefficient fields on $\Z^d$
\cite{KellerLemm2023}.  The free-lattice asymptotic
\eqref{eq:KPP-asymptotic} does not determine the largest global coefficient
in \eqref{eq:discrete-hardy-d}: it gives no information about the finitely
many sites near the origin and does not determine the sign of the lower-order
remainder.\\

Gupta proved that the sharp Hardy coefficient is of order $d$ as
the dimension tends to infinity, in contrast to the quadratic growth of the
continuous coefficient; equivalently, $\lambda_d\asymp d$ as $d\to\infty$
\cite{Gupta2023}.  More recently, Gupta \cite{Gupta2026} and, independently,
Huang and Ye \cite{HuangYe2026} obtained the precise asymptotic formula
\[
\lim_{d\to\infty}\frac{\lambda_d}{d}=2.
\]
These results do not give the exact coefficient in a fixed low dimension.
Weighted, nonlinear, and improved one-dimensional forms have been studied in
\cite{FischerKellerPogorzelski2023,Dyda2025,YessirkegenovZhangirbayev2026}.
Recent accounts still list the explicit multidimensional pure-power constant
as an open problem \cite{YessirkegenovZhangirbayev2026}.

\section{Main results}
We focus on $d=3$, the lowest dimension covered by the classical
inequality \eqref{eq:continuous-hardy}.  In this dimension the continuous
Hardy coefficient is $1/4$. Our main result is that the same coefficient is sharp on the lattice,
so that $\lambda_3=1/4$.

For $n\geq1$, define the quantity
\begin{equation}
\rho(n)
=
\frac{928n^2+5356n-2197}
{4n(8n+13)(64n^2+108n+169)}.
\label{eq:rho-definition}
\end{equation}

\begin{theorem}[Improved Hardy inequality]
\label{thm:main}
For every finitely supported function $u:\Z^3\to\C$,
\begin{equation}
\begin{aligned}
\E_3(u)-\frac14\sum_{x\neq0}\frac{|u(x)|^2}{|x|^2}
&\geq
\frac{12}{13}|u(0)|^2
+
\sum_{x\neq0}\rho(|x|^2)|u(x)|^2.
\end{aligned}
\label{eq:explicit-remainder}
\end{equation}
The right-hand side is strictly positive whenever $u\not\equiv0$.
\end{theorem}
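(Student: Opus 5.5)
The approach is an edgewise completion of squares against an explicit antisymmetric edge field, as announced in the abstract. For each directed nearest-neighbor edge $(x,y)$ with $y=x\pm e_j$ we choose a real number $\beta(x,y)=-\beta(y,x)$ that plays the role of the discrete logarithmic derivative of the ground state $|x|^{-1/2}$. The natural choice, suggested by $\nabla|x|^{-1/2}/|x|^{-1/2}=-x/(2|x|^2)$, is a reciprocal field such as
\[
\beta(x,x+e_j)=\frac{c_1\,(2x_j+1)}{|x|^2+|x+e_j|^2+c_2},
\]
with constants $c_1\approx 1$ and $c_2$ tuned so that the final weight matches \eqref{eq:rho-definition}. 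The denominators $8n+13$ and $64n^2+108n+169$ in $\rho$ suggest that the auxiliary quantity is an expression in $n=|x|^2$ with shifts like $13/8$. Having fixed $\beta$, we use the elementary edge inequality
\[
|u(y)-u(x)|^2\ \ge\ \beta(x,y)\bigl(|u(x)|^2-|u(y)|^2\bigr)-\tfrac{\beta(x,y)^2}{1+\ldots}\,(\ldots),
\]
or more concretely $|a-b|^2\ge t(|a|^2-|b|^2)-\frac{t^2}{1-t}|a|^2$-type forms. Equivalently, we write $|a-b|^2=|a-b-t\,\cdot|^2+\ldots$ and drop the square. Summing over all edges then gives $\E_3(u)\ge\sum_x W(x)|u(x)|^2$, where $W(x)$ is the discrete divergence of $\beta$ at $x$ minus the sum of the squared contributions.

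The key steps run in order. (1) Verify that the completion of squares is legitimate edge by edge, including the symmetry needed so that every undirected edge is used once and each vertex collects contributions from its six incident edges. (2) Compute $W(x)$ explicitly as a rational function of the coordinates $x_1,x_2,x_3$. (3) Show that $W(0)\ge 12/13$ by direct evaluation. (4) Show that $W(x)\ge \frac{1}{4|x|^2}+\rho(|x|^2)$ for every $x\neq0$. (5) Prove the positivity claim: $\rho(n)>0$ for $n\ge1$, because the numerator satisfies $928+5356-2197>0$ and is increasing, so the right-hand side vanishes only when $u\equiv0$.

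Step (4) is the main obstacle, since $W(x)$ depends on the individual coordinates while the target depends only on $n=|x|^2$. We plan to group the six edge terms by direction $j$. Each direction contributes a function $g_n(s)$ of $s=x_j^2$ (or of $x_j$, symmetrized over $\pm e_j$), with $n$ held fixed. We then show that $g_n$ is concave in $s$ on $[0,n]$; this is the ``concavity argument for the vertex weight'' mentioned in the abstract. Concavity, combined with the constraint $s_1+s_2+s_3=n$ with $s_j\ge0$, reduces the minimum of $\sum_j g_n(s_j)$ to extreme configurations of the simplex, where the worst case is expected to be $x=(\sqrt n,0,0)$. At that point $W$ collapses to a one-variable rational function of $n$, and $W-\frac1{4n}$ equals, or dominates, exactly $\rho(n)$. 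The latter is a polynomial inequality in $n$ that can be checked by clearing denominators and observing positive coefficients after the shift $n\mapsto n+1$.

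There is a complication: lattice points with $s_j=n$ need not exist, and concavity along the real simplex is only a relaxation. This is acceptable, since a lower bound over the continuous simplex is still a lower bound. The delicate part is proving concavity of $g_n$ in $s$ for all $n\ge1$, which we would do by computing $\partial_s^2 g_n$ and clearing its positive denominators. Small $n$ (say $n\le3$) may need to be checked by hand separately.
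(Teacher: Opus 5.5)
\textbf{Verdict: genuine gap.} Your outline has the same architecture as the paper: edgewise completion of squares, pairing the neighbors $x\pm e_j$, concavity in $s_j=x_j^2$ on the simplex $s_1+s_2+s_3=n$, reduction to an axis point, then a one-variable rational check. But the step that carries the whole proof is left open. You never fix the edge field or the completion, and ``$c_1\approx1$, $c_2$ tuned so that the final weight matches $\rho$'' is essentially a restatement of the theorem.

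The indications you do give are off in three ways.
\begin{enumerate}
\item With $\beta(x,x+e_j)=c_1(2x_j+1)/(|x|^2+|x+e_j|^2+c_2)$, the divergence of $\beta$ at $x$ is $c_1/|x|^2+o(|x|^{-2})$. The quadratic corrections needed to keep each edge form nonnegative amount to about $\beta^2$ per edge, i.e.\ $c_1^2/|x|^2$ per vertex. Hence $W(x)\approx(c_1-c_1^2)/|x|^2$, which forces $c_1=\tfrac12$ exactly; $c_1=1$ kills the leading term.
\item Your displayed edge inequality with $-\frac{t^2}{1-t}|a|^2$ is false for $t<0$: with $t=-\tfrac12$, $a=1$, $b=2$ it asserts $1\ge\tfrac43$. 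And $t<0$ does occur, e.g.\ on forward edges with $x_j\le-1$. The valid one-sided form has $\frac{t^2}{1+t}$ with $t>-1$.
\item More importantly, $W$ depends not only on $\beta$ but on how the quadratic cost is split between the two endpoints. The specific $\rho$ in the statement, with its factors $8n+13$ and $64n^2+108n+169$, comes from one particular choice. A generic split such as $\beta^2/2$ at each end gives a different function of $n$, which would have to be compared with $\rho$ for every $n\ge1$. Nothing in your plan shows that a single $c_2$ achieves both $W(0)\ge\frac{12}{13}$ (which favors small $c_2$) and the second-order term $\frac{29}{64}|x|^{-4}$ of $\rho$ (which favors large $c_2$).
\end{enumerate}

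The missing idea is to prescribe the multiplicative, reciprocal edge factor rather than $\beta$. The paper uses the exact identity $|z-w|^2=(1-a)|z|^2+(1-a^{-1})|w|^2+|a^{1/2}z-a^{-1/2}w|^2$ with $a_{xy}=\frac{5|x|^2+3|y|^2+8}{3|x|^2+5|y|^2+8}$ and $a_{yx}=a_{xy}^{-1}$. Then $W(x)=\sum_{y\sim x}(1-a_{xy})$ and $1-a_{x,x\pm e_j}=\frac{2(1\pm2x_j)}{A\pm10x_j}$ with $A=8n+13$.

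In your notation this is exactly $c_1=\tfrac12$, $c_2=2$, combined with the Cayley completion $a_{xy}=\frac{1-\beta(x,y)/2}{1+\beta(x,y)/2}$. That is, the cost charged to $x$ is $\frac{\beta(x,y)^2/2}{1+\beta(x,y)/2}$, and this particular split is what makes the vertex weight a simple rational function. Pairing gives $W(x)=\sum_j f_n(x_j^2)$ with $f_n(s)=\frac{4(A-20s)}{A^2-100s}$ and $f_n''(s)=\frac{16000A(5-A)}{(A^2-100s)^3}<0$. Hence $W(x)\ge f_n(n)+2f_n(0)$, and $f_n(n)+2f_n(0)-\frac1{4n}=\rho(n)$ identically, while $W(0)=6\cdot\frac2{13}=\frac{12}{13}$.

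Within this Cayley family the two constraints above pin the constant exactly: $W(0)\ge\frac{12}{13}$ needs $c\le2$, and the $\frac{29}{64}$ term needs $c\ge2$. With this field no small-$n$ casework is required.
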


Since the right-hand side of \eqref{eq:explicit-remainder} is nonnegative,
Theorem~\ref{thm:main} gives the Hardy inequality with coefficient $1/4$.
Proposition~\ref{prop:sharpness} later shows that this coefficient cannot
be improved.

\begin{corollary}[Sharp Hardy inequality]
\label{cor:sharp-constant}
For every finitely supported function $u:\Z^3\to\C$,
\begin{equation}
\E_3(u)= \sum_{x\in\Z^3}\sum_{j=1}^3
 |u(x+e_j)-u(x)|^2
\geq
\frac14
\sum_{x\in\Z^3\setminus\{0\}}
 \frac{|u(x)|^2}{|x|^2}.
\label{eq:main-hardy}
\end{equation}
The coefficient $1/4$ is sharp, and the inequality is strict whenever
$u\not\equiv0$.  In particular,
\[
\lambda_3=\frac14.
\]
\end{corollary}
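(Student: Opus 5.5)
The inequality and its strictness follow from Theorem~\ref{thm:main}; the sharpness of $1/4$ needs a separate test-function argument, which Proposition~\ref{prop:sharpness} provides later in the paper. The plan for each part is as follows.

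\emph{Inequality and strictness.} The first step is to check that $\rho(n)>0$ for every integer $n\geq1$. In \eqref{eq:rho-definition} every factor of the denominator $4n(8n+13)(64n^2+108n+169)$ is positive for $n\geq1$. The numerator $928n^2+5356n-2197$ is increasing on $n\geq1$, and at $n=1$ it equals $928+5356-2197=4087>0$. Hence every term on the right-hand side of \eqref{eq:explicit-remainder} is nonnegative, and \eqref{eq:main-hardy} follows at once.

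For strictness, let $u\not\equiv0$. If $u(0)\neq0$, the term $\frac{12}{13}|u(0)|^2$ is positive. Otherwise some $x\neq0$ has $u(x)\neq0$, and then $\rho(|x|^2)|u(x)|^2>0$. In both cases $\E_3(u)-\frac14\sum_{x\ne0}|u(x)|^2/|x|^2>0$. Since every $u\in\mathcal D_0(\Z^3)$ is finitely supported, this also gives $\lambda_3\geq 1/4$.

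\emph{Sharpness, $\lambda_3\leq 1/4$.} I would imitate the continuous minimizing sequence built from the formal ground state $|x|^{-1/2}$.
\begin{enumerate}
\item For $\varepsilon>0$, set $f_\varepsilon(x)=|x|^{-(1+\varepsilon)/2}$ for $x\neq0$ and $f_\varepsilon(0)=0$. This function has finite energy and finite weighted norm.
\item For fixed $\varepsilon$ and the moment ignoring finite support, compare the two sums with the corresponding integrals over $|x|\geq1$. The weighted sum satisfies
\[
\sum_{x\neq0}f_\varepsilon(x)^2|x|^{-2}=\sum_{x\ne0}|x|^{-3-\varepsilon}=\frac{4\pi}{\varepsilon}+O(1).
\]
\item By Taylor expansion, $|f_\varepsilon(x+e_j)-f_\varepsilon(x)|^2=(\partial_jf_\varepsilon(x))^2+O(|x|^{-4-\varepsilon})$, with the error uniform in $\varepsilon\in(0,1]$. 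Summing over $j$ gives $|\nabla f_\varepsilon|^2=\frac{(1+\varepsilon)^2}{4}|x|^{-3-\varepsilon}$, so
\[
\E_3(f_\varepsilon)=\frac{(1+\varepsilon)^2}{4}\cdot\frac{4\pi}{\varepsilon}+O(1),
\]
because the error terms are summable uniformly in $\varepsilon$. The finitely many edges near the origin contribute only $O(1)$.
\item The Rayleigh quotient of $f_\varepsilon$ is therefore $\frac{(1+\varepsilon)^2}{4}+O(\varepsilon)\to\frac14$ as $\varepsilon\to0$.
\item To obtain admissible functions, multiply $f_\varepsilon$ by the cutoff $\min\{1,\max\{0,2-|x|/R\}\}$ and let $R\to\infty$ with $\varepsilon$ fixed. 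The cutoff contributes extra energy of order $R^{-2}\cdot R^{-1-\varepsilon}\cdot R^3=O(R^{-\varepsilon})\to0$. Both quotients therefore converge to those of $f_\varepsilon$, and a diagonal choice of $(\varepsilon,R)$ gives $\lambda_3\leq1/4$.
\end{enumerate}

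The main obstacle is the uniformity in $\varepsilon$ of the lattice-versus-continuum error estimates. One needs the second-derivative bound $|D^2f_\varepsilon|\lesssim|x|^{-5/2-\varepsilon/2}$ on segments that stay away from $0$, and an integral comparison whose error is $O(1)$ rather than $O(1/\varepsilon)$. Once both sums are seen to diverge like $1/\varepsilon$ with bounded errors, the ratio converges to $1/4$.
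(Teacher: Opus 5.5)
Your treatment of the inequality and its strictness matches the paper: both read them off from Theorem~\ref{thm:main}, using $\rho(n)>0$ and $W(0)=12/13$. For sharpness you take a genuinely different route from Proposition~\ref{prop:sharpness}, and it is correct.

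The paper uses the logarithmically modulated ground state $\varphi_L(x)=|x|^{-1/2}\phi(\log|x|/L)$. Its continuum quotient is exactly $\frac14+L^{-2}\int|\phi'|^2/\int|\phi|^2$, and it is transferred to the lattice by rescaling, $u_{L,N}(x)=\varphi_L(x/N)$. For fixed $L$ everything lives on a compact annulus away from the origin, so only qualitative convergence of Riemann sums is needed, with no uniformity in $L$.

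You instead work at unit lattice scale with $|x|^{-(1+\varepsilon)/2}$. Since both sums grow like $4\pi/\varepsilon$, this forces a quantitative comparison: the lattice-versus-continuum errors must be $O(1)$ uniformly in $\varepsilon\in(0,1]$. Your estimates deliver this:
\begin{itemize}
\item for $|x|\geq2$ the Taylor remainder is $O(|x|^{-4-\varepsilon})$ with $\varepsilon$-independent constants;
\item comparing $\sum_{x\neq0}|x|^{-3-\varepsilon}$ with cube averages costs $O(|x|^{-4-\varepsilon})$ per site, plus a bounded integral over $B_1$ minus the unit cube centred at $0$;
\item the finitely many edges touching $|x|<2$ contribute $O(1)$ because $0\leq f_\varepsilon\leq1$ on $\Z^3$.
\end{itemize}
Your Lipschitz cutoff is essential, not cosmetic: a sharp cutoff at $|x|=R$ would add boundary energy of order $R^2\cdot R^{-1-\varepsilon}=R^{1-\varepsilon}$. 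The cross terms are handled by the triangle inequality in $\ell^2$ over edges, so $\E_3(f_\varepsilon\chi_R)\to\E_3(f_\varepsilon)$ as $R\to\infty$ with $\varepsilon$ fixed.

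What your route buys is an explicit rate, a quotient equal to $\frac14+O(\varepsilon)$, obtained directly on $\Z^3$ without rescaling. What the paper's route buys is that it needs no uniform error bounds at all.
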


The lower bound in Theorem~\ref{thm:main} follows from an explicit edge
field.  Write $x\sim y$ when $x$ and $y$ are nearest neighbors.  For an
ordered neighboring pair, define
\begin{equation}
a_{xy}
=
\frac{5|x|^2+3|y|^2+8}
{3|x|^2+5|y|^2+8}.
\label{eq:a-xy}
\end{equation}
Then $a_{xy}>0$ and
\begin{equation}
a_{yx}=a_{xy}^{-1}.
\label{eq:reciprocity}
\end{equation}

The choice of $a_{xy}$ is motivated by the formal ground state $h(x)=|x|^{-1/2}$ of the
critical three-dimensional Hardy operator,
\[
\left(-\Delta-\frac{1}{4|x|^2}\right)h=0
\qquad\text{on }\R^3\setminus\{0\}.
\]
The function $h$ is not an admissible extremizer for the Hardy inequality,
but it is the positive zero-energy solution associated with the sharp Hardy
potential; see, for example,
\cite{FrankSeiringer2008,PinchoverTintarev2006}.  For a nearest-neighbor
edge $x\sim y$ with $x,y\neq0$, write
\[
n=|x|^2,\qquad m=|y|^2,\qquad \delta=m-n.
\]
Then
\[
\frac{h(y)}{h(x)}
=
\left(1+\frac{\delta}{n}\right)^{-1/4}
=
1-\frac{\delta}{4n}
+
O\left(\frac{\delta^2}{n^2}\right),
\qquad |x|\to\infty,
\]
and $a_{xy}$ has the same first-order behavior.  
The construction of the edge field is discussed
further in Appendix~\ref{appendix:a_xy}.
Associated with this edge field, define the weight
\begin{equation}
W(x)
=
\sum_{y\sim x}(1-a_{xy}).
\label{eq:W-definition}
\end{equation}

Individual terms in this sum need not be positive.  The point is that,
after the six neighboring contributions are combined, $W(x)$ is positive and satisfies
the lower bound in Proposition~\ref{prop:weight-bound}.

Let
\begin{equation}
\label{eq:Eplus}
\mathcal{E}_+
=
\{(x,x+e_j):x\in\Z^3,\ 1\leq j\leq3\}
\end{equation}
be the set of forward oriented edges.  Every undirected nearest-neighbor edge
occurs exactly once in $\mathcal E_+$, in agreement with the normalization in
\eqref{eq:energy-d}.

\begin{proposition}[Exact edge decomposition]
\label{prop:edge-decomposition}
For every finitely supported $u:\Z^3\to\C$,
\begin{equation}
\begin{aligned}
&\E_3(u)-\sum_{x\in\Z^3}W(x)|u(x)|^2=
\sum_{(x,y)\in\mathcal{E}_+}
\left|
 a_{xy}^{1/2}u(x)-a_{xy}^{-1/2}u(y)
\right|^2.
\end{aligned}
\label{eq:edge-decomposition}
\end{equation}
In particular,
\[
\E_3(u)\geq\sum_{x\in\Z^3}W(x)|u(x)|^2.
\]
\end{proposition}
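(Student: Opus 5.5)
The plan is to expand the right-hand side of \eqref{eq:edge-decomposition} edge by edge and then regroup by vertex. For a forward edge $(x,y)\in\mathcal E_+$, I will write out
\[
\bigl|a_{xy}^{1/2}u(x)-a_{xy}^{-1/2}u(y)\bigr|^2
=
a_{xy}|u(x)|^2+a_{xy}^{-1}|u(y)|^2-2\,\mathrm{Re}\bigl(u(x)\overline{u(y)}\bigr).
\]
This uses only $a_{xy}>0$, so that the square roots are real and their product is $1$. I will compare it with
\[
|u(x)-u(y)|^2=|u(x)|^2+|u(y)|^2-2\,\mathrm{Re}\bigl(u(x)\overline{u(y)}\bigr).
\]
Subtracting, the cross terms cancel exactly, leaving
\[
|u(x)-u(y)|^2-\bigl|a_{xy}^{1/2}u(x)-a_{xy}^{-1/2}u(y)\bigr|^2=(1-a_{xy})|u(x)|^2+(1-a_{yx})|u(y)|^2 .
\]
Here the reciprocity \eqref{eq:reciprocity} rewrites $a_{xy}^{-1}$ as $a_{yx}$.

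The next step is to sum this identity over $\mathcal E_+$. Since $u$ is finitely supported, only finitely many edges give nonzero terms, so all rearrangements are legitimate. The left side sums to $\E_3(u)$, because each undirected edge occurs once in $\mathcal E_+$. On the right, each forward edge $(x,y)$ contributes $(1-a_{xy})|u(x)|^2$ to vertex $x$ and $(1-a_{yx})|u(y)|^2$ to vertex $y$.

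Fixing a vertex $x$, its six neighbours are $x\pm e_j$. The edges $(x,x+e_j)$ contribute $(1-a_{x,x+e_j})|u(x)|^2$. The edges $(x-e_j,x)$ contribute $(1-a_{x,x-e_j})|u(x)|^2$ through their second endpoint. Together these give $\sum_{y\sim x}(1-a_{xy})|u(x)|^2=W(x)|u(x)|^2$ by \eqref{eq:W-definition}. Rearranging then yields \eqref{eq:edge-decomposition}, and the inequality follows because the right side is a sum of squares.

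There is no real obstacle here. The only points needing care are the bookkeeping that each oriented pair $(x,y)$ with $y\sim x$ is accounted for exactly once from the viewpoint of $x$, and the check that \eqref{eq:reciprocity} holds. The latter is immediate from \eqref{eq:a-xy}, since swapping $x$ and $y$ exchanges numerator and denominator, and both are at least $8>0$. The origin needs no separate treatment, since $a_{xy}$ is defined using $|x|^2$ and $|y|^2$, which makes sense at $0$.
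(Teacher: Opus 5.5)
Your proposal is correct and follows the paper's proof essentially step for step. The paper packages your expansion as the two-point identity in Lemma~\ref{lem:edge-square}, applies it on each forward edge using reciprocity, and regroups by vertex through the outgoing edges $(v,v+e_j)$ and incoming edges $(v-e_j,v)$, exactly as you do.
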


This proposition is key in the proof.  The next step is to obtain a lower
bound for $W(x)$.

\begin{proposition}[Lower bound for $W(x)$]
\label{prop:weight-bound}
The weight in \eqref{eq:W-definition} satisfies
\[
W(0)=\frac{12}{13}.
\]
Moreover, for every $x\neq0$, and $\rho$ as in \eqref{eq:rho-definition}, we have
\begin{equation}
W(x)
\geq
\frac{1}{4|x|^2}
+
\rho(|x|^2).
\label{eq:W-rho-bound}
\end{equation}
Equality in \eqref{eq:W-rho-bound} holds if and only if $x$ lies on one of the three coordinate axes.
In particular, since $\rho(n)>0$ for every integer $n\geq1$,
\begin{equation}
\label{eq:W-bound-squared}
    W(x)>\frac{1}{4|x|^2},
\qquad x\neq0.
\end{equation}
\end{proposition}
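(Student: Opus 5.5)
The plan is to compute $W(x)$ in closed form, reduce it to a function of the squared coordinates, and use concavity to show the minimum over each sphere $|x|^2=n$ is attained on the coordinate axes.

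\emph{Step 1: a closed form for each edge term.} Fix $x$ and put $n=|x|^2$. Each neighbour has the form $y=x\pm e_j$, so $m=|y|^2=n+1+2s$ with $s=\pm x_j$. From \eqref{eq:a-xy},
\[
1-a_{xy}=\frac{2(m-n)}{3n+5m+8}=\frac{2(1+2s)}{A+10s},
\qquad A:=8n+13 .
\]

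\emph{Step 2: pair opposite neighbours.} The two neighbours in direction $j$ correspond to $s=x_j$ and $s=-x_j$. Adding their contributions gives
\[
F(t):=\frac{2(1+2s)}{A+10s}+\frac{2(1-2s)}{A-10s}=\frac{4(A-20t)}{A^2-100t},
\qquad t=x_j^2 .
\]
Hence $W(x)=F(t_1)+F(t_2)+F(t_3)$, where $t_j=x_j^2\ge0$ and $t_1+t_2+t_3=n$. The function $F$ depends on $x$ only through $n$.

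\emph{Step 3: the origin.} For $x=0$ we have $A=13$ and $t_j=0$ for all $j$. So $W(0)=3F(0)=3\cdot 4/13=12/13$.

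\emph{Step 4: strict concavity of $F$ on $[0,n]$.} For $n\ge1$, rewrite
\[
F(t)=\frac45-\frac{4A(A-5)}{5}\cdot\frac{1}{A^2-100t}.
\]
The denominator is positive on $[0,n]$, since $A^2-100t\ge A^2-100n=64n^2+108n+169>0$. On this interval $t\mapsto 1/(A^2-100t)$ is strictly convex. Its coefficient $-4A(A-5)/5$ is negative because $A>5$. Therefore $F$ is strictly concave on $[0,n]$.

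\emph{Step 5: minimise over the simplex.} The map $(t_1,t_2,t_3)\mapsto\sum_j F(t_j)$ is strictly concave on the simplex $\{t_j\ge0,\ \sum_j t_j=n\}$. A strictly concave function on a polytope attains its minimum only at vertices. The vertices are the permutations of $(n,0,0)$, which are exactly the axis points. This gives
\[
W(x)\ge F(n)+2F(0),
\]
with equality if and only if two of the $t_j$ vanish, i.e.\ $x$ lies on a coordinate axis.

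\emph{Step 6: identify the bound with $\rho$.} We have
\[
F(0)=\frac{4}{8n+13},\qquad F(n)=\frac{4(13-12n)}{64n^2+108n+169}.
\]
It remains to verify the rational identity
\[
F(n)+2F(0)-\frac{1}{4n}=\rho(n),
\]
by bringing everything to the common denominator $4n(8n+13)(64n^2+108n+169)$ and checking that the numerator is $928n^2+5356n-2197$. This also explains the denominator in \eqref{eq:rho-definition}.

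\emph{Step 7: positivity of $\rho$.} For integers $n\ge1$ the numerator satisfies $928n^2+5356n-2197\ge 928+5356-2197>0$, and the denominator is positive. So $\rho(n)>0$, and \eqref{eq:W-bound-squared} follows.

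\emph{Main obstacle.} The key step is recognising that pairing opposite neighbours makes $W$ a sum of one concave function of the $x_j^2$. The only place needing care is Step 4, where we must check that the denominator of $F$ stays positive on the whole interval $[0,n]$. The algebra in Step 6 is routine, but it should be checked carefully, e.g.\ by comparing coefficients.
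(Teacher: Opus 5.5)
Your proposal is correct and follows essentially the same route as the paper: you pair opposite neighbours into $f_n(x_j^2)$ with $A=8n+13$, use concavity on $[0,n]$, reduce to the vertices of the simplex, and verify the same rational identity for $\rho$. Two small differences are worth noting: your partial-fraction form of $F$ is a tidier way to get the paper's $f_n''<0$, and your use of strict concavity to show the minimum is attained only at vertices supplies the ``equality iff on an axis'' claim, which the paper states but does not explicitly argue.
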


The following proposition gives the sharpness needed in
Corollary~\ref{cor:sharp-constant}.

\begin{proposition}[Sharpness]
\label{prop:sharpness}
There exists a sequence of nonzero functions $u_k\in C_c(\Z^3)$ satisfying
$u_k(0)=0$ and
\[
\lim_{k\to\infty}
\frac{\E_3(u_k)}
{\displaystyle\sum_{x\neq0}\frac{|u_k(x)|^2}{|x|^2}}
=
\frac14.
\]
\end{proposition}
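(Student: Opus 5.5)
We will test the Hardy quotient on truncations of the formal ground state $h(x)=|x|^{-1/2}$, cut off logarithmically both near the origin and at infinity, and show that the numerator and denominator diverge at the same rate with ratio tending to $1/4$. Concretely, for $k\geq 2$ set
\[
u_k(x)=|x|^{-1/2}\,\phi_k\bigl(\log|x|\bigr),\qquad x\neq0,\qquad u_k(0)=0,
\]
where $\phi_k(t)=\psi(t/k)$ for a fixed piecewise linear $\psi:\R\to[0,1]$. We take $\psi$ supported in $[1,4]$, equal to $1$ on $[2,3]$, and Lipschitz. Then $u_k$ is finitely supported, vanishes at $0$, and is supported in the annulus $e^{k}\le|x|\le e^{4k}$, far from the origin.

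The denominator comes first. We have $\sum_{x\neq0}|u_k(x)|^2/|x|^2=\sum_x |x|^{-3}\psi(\log|x|/k)^2$. Comparing the lattice sum with the integral over unit cubes (the summand varies by a relative $O(1/|x|)$ on a cube), it equals
\[
4\pi\int_0^\infty r^{-1}\psi(\log r/k)^2\,dr+O(1)=4\pi k\|\psi\|_2^2+O(1),
\]
so it grows linearly in $k$.

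For the numerator we use a Taylor expansion. With $f_k(x)=|x|^{-1/2}\psi(\log|x|/k)$, a smooth function on the support region, we write $u_k(x+e_j)-u_k(x)=\partial_jf_k(x)+R$, where $|R|\le \tfrac12\sup|\nabla^2 f_k|=O(|x|^{-5/2})$ uniformly in $k$ on the support. Hence
\[
\E_3(u_k)=\sum_x|\nabla f_k(x)|^2+O\Bigl(\sum_{|x|\ge e^k}|x|^{-4}\Bigr)=\sum_x|\nabla f_k(x)|^2+O(e^{-k}).
\]
The cross terms are $|x|^{-3/2}\cdot|x|^{-5/2}$, and their sum is controlled by $\sum_{|x|\ge e^k}|x|^{-4}$. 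Replacing the lattice sum of $|\nabla f_k|^2$ by the integral again costs $O(1)$. The continuous computation is standard: with $r=|x|$,
\[
\nabla f_k=\Bigl(-\tfrac12 r^{-3/2}\psi+r^{-3/2}k^{-1}\psi'\Bigr)\frac{x}{r},
\]
so
\[
\int|\nabla f_k|^2=4\pi\int r^{-1}\Bigl(\tfrac14\psi^2-k^{-1}\psi\psi'+k^{-2}\psi'^2\Bigr)dr=4\pi\Bigl(\tfrac k4\|\psi\|_2^2+0+k^{-1}\|\psi'\|_2^2\Bigr).
\]
The middle term vanishes because it is a total derivative. Dividing, the quotient equals $\tfrac14+O(1/k)\to\tfrac14$.

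The main obstacle is making the lattice-to-integral comparisons rigorous with errors $O(1)$ rather than $O(k)$. We plan to handle this by bounding, on each unit cube $Q_x=x+[0,1)^3$, the difference between $g(x)$ and $\int_{Q_x}g$ by $\sup_{Q_x}|\nabla g|$ for $g=|x|^{-3}\psi^2$ and $g=|\nabla f_k|^2$. Both gradients are $O(|x|^{-4})$ uniformly in $k$, since $\psi'$ brings only a factor $1/k$. Summing over $|x|\ge e^k$ gives $O(e^{-k})$, which is even better than needed. Piecewise linearity of $\psi$ only introduces kinks; if needed, we can take $\psi$ smooth instead to avoid them. Since the denominator grows like $k$ and the absolute errors are bounded, the limit is exactly $1/4$, which proves the claim.
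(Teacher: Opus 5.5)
Your proof is correct. It differs from the paper's mainly in how you pass from the continuum to the lattice.

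Your test functions are essentially the paper's family $u_{L,N}(x)=\varphi_L(x/N)$ restricted to the diagonal $N\approx e^{k}$, $L=3k$, up to a constant factor, and your radial computation is the same as the paper's. The paper fixes $L$, lets $N\to\infty$ using only uniform convergence of difference quotients and Riemann sums on a fixed compact set, and then chooses $N(L)$ by a diagonal argument. That needs nothing beyond smoothness of $\varphi_L$ and support away from the origin, but it gives no explicit sequence and no rate.

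You instead keep the lattice unscaled and use that the support lies in $|x|\geq e^k$. There the Taylor remainder is $O(|x|^{-5/2})$ and the cube-averaging errors are $O(|x|^{-4})$, uniformly in $k$, because each derivative landing on $\psi(\log|x|/k)$ costs a factor $1/(k|x|)$. So every lattice-versus-continuum error is $O\bigl(\sum_{|x|\gtrsim e^k}|x|^{-4}\bigr)=O(e^{-k})$, negligible against main terms of size $\asymp k$. This gives an explicit one-parameter family with quotient $\tfrac14+O(k^{-2})$, which is better than the $O(1/k)$ you state.

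Two small corrections:
\begin{enumerate}
\item Commit to a smooth $\psi$ from the start. With a piecewise linear $\psi$, the bound $|R|\leq\tfrac12\sup|\nabla^2 f_k|$ and the Lipschitz estimate for $|\nabla f_k|^2$ both fail near the spheres where $\psi$ has kinks. The extra error there is still negligible, but it would need its own argument.
\item The cube estimate should carry the factor $\operatorname{diam}Q_x=\sqrt3$, which is harmless.
\end{enumerate}
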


Theorem~\ref{thm:main} and Corollary~\ref{cor:sharp-constant} are proved in
Section~\ref{sec:main-proof} after the proofs of these propositions.

\begin{remark}[Optimal weights]\label{rmk:optweight}
\textit{The weight $W$ is not claimed to be a critical or pointwise optimal Hardy
weight.  Our result concerns the best constant multiplying the fixed weight
$|x|^{-2}$.}
\end{remark}

\subsection*{Acknowledgement}
The author thanks Professor Paata Ivanisvili for his guidance and continued support.

\section{Proof of Proposition~\ref{prop:edge-decomposition}}
\label{sec:edge-proof}

We start with a two-point identity.

\begin{lemma}
\label{lem:edge-square}
Let $a>0$ and $z,w\in\C$.  Then
\begin{equation}
\begin{aligned}
|z-w|^2
&=(1-a)|z|^2+(1-a^{-1})|w|^2
+
\left|a^{1/2}z-a^{-1/2}w\right|^2.
\end{aligned}
\label{eq:two-point-square}
\end{equation}
\end{lemma}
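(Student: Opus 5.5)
The identity in Lemma~\ref{lem:edge-square} is a direct algebraic verification. I will expand the last square on the right-hand side of \eqref{eq:two-point-square} and then collect terms; no earlier result is needed.

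Since $a>0$, the numbers $a^{1/2}$ and $a^{-1/2}$ are positive reals. Hence
\[
\left|a^{1/2}z-a^{-1/2}w\right|^2
= a|z|^2 + a^{-1}|w|^2 - a^{1/2}a^{-1/2}\bigl(z\bar w+\bar z w\bigr)
= a|z|^2 + a^{-1}|w|^2 - 2\operatorname{Re}(z\bar w).
\]
The cross term has coefficient exactly $1$ because $a^{1/2}a^{-1/2}=1$. This is the point where the reciprocal weights $a$ and $a^{-1}$ on the two endpoints are essential.

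Adding $(1-a)|z|^2+(1-a^{-1})|w|^2$ cancels the $a|z|^2$ and $a^{-1}|w|^2$ terms. What remains is
\[
|z|^2+|w|^2-2\operatorname{Re}(z\bar w)=|z-w|^2 ,
\]
which is the left-hand side of \eqref{eq:two-point-square}.

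There is no genuine obstacle. The only care needed is to use the real part $\operatorname{Re}(z\bar w)$ for complex values rather than the product $zw$, and to note that positivity of $a$ makes the square roots real, so they pass through the modulus unchanged.
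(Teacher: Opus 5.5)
Your proposal is correct and matches the paper's proof: you expand the last square as $a|z|^2+a^{-1}|w|^2-2\operatorname{Re}(z\overline{w})$, then combine it with $(1-a)|z|^2+(1-a^{-1})|w|^2$, so that the coefficients of $|z|^2$ and $|w|^2$ both become $1$ and the result is $|z-w|^2$.
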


\begin{proof}
Expanding the last square gives
\[
a|z|^2+a^{-1}|w|^2-2\operatorname{Re}(z\overline w).
\]
Adding the first two terms on the right-hand side of
\eqref{eq:two-point-square}, the coefficient of $|z|^2$ is
$(1-a)+a=1$, and the coefficient of $|w|^2$ is
$(1-a^{-1})+a^{-1}=1$.  The resulting expression is
\[
|z|^2+|w|^2-2\operatorname{Re}(z\overline w)
=|z-w|^2.
\]
\end{proof}

\begin{proof}[Proof of Proposition~\ref{prop:edge-decomposition}]
For an ordered neighboring pair $x\sim y$, the numerator and denominator in
\eqref{eq:a-xy} are positive.  Interchanging $x$ and $y$ gives
\[
a_{yx}
=
\frac{5|y|^2+3|x|^2+8}
{3|y|^2+5|x|^2+8}
=
a_{xy}^{-1}.
\]

Next, we apply Lemma~\ref{lem:edge-square} to every forward edge
$(x,y)\in\mathcal E_+$, with $z=u(x)$, $w=u(y)$, and $a=a_{xy}$.
Using the reciprocal relation
$
a_{yx}=a_{xy}^{-1},
$
we obtain
\begin{equation}\label{eq:red}
    \begin{aligned}
|u(x)-u(y)|^2
={}&
(1-a_{xy})|u(x)|^2
+
(1-a_{yx})|u(y)|^2
\\
&+
\left|
a_{xy}^{1/2}u(x)
-
a_{xy}^{-1/2}u(y)
\right|^2.
\end{aligned}
\end{equation}
Substituting \eqref{eq:red} into \eqref{eq:energy-d}  and summing over all $(x,y)\in\mathcal E_+$ gives
\[
\begin{aligned}
\mathcal E_3(u)
=&
\underbrace{\sum_{(x,y)\in\mathcal E_+}
(1-a_{xy})|u(x)|^2}_{=S_1}
+
\underbrace{\sum_{(x,y)\in\mathcal E_+}
(1-a_{yx})|u(y)|^2}_{=S_2}
\\
&+
\sum_{(x,y)\in\mathcal E_+}
\left|
a_{xy}^{1/2}u(x)
-
a_{xy}^{-1/2}u(y)
\right|^2.
\end{aligned}
\]

The last sum is the nonnegative remainder in
\eqref{eq:edge-decomposition}. We now regroup the first two sums
according to vertices.

Fix a vertex $v\in\mathbb Z^3$.  The three forward edges starting at $v$
are
\[
(v,v+e_j),
\qquad j=1,2,3.
\]
By \eqref{eq:red}, the coefficient attached to the first endpoint of a
forward edge $(x,y)$ is $1-a_{xy}$.  Hence the first sum contributes
\[
S_1(v)
:=
\sum_{j=1}^3
(1-a_{v,v+e_j})|u(v)|^2
\]
to the coefficient of $|u(v)|^2$.\\

We now consider the second sum,
\[
\sum_{(x,y)\in\mathcal E_+}
(1-a_{yx})|u(y)|^2.
\]
Here $v$ appears as the second endpoint of the three forward edges
\[
(v-e_j,v),
\qquad j=1,2,3.
\]
For such an edge, we have $x=v-e_j$ and $y=v$.  Therefore, again by
\eqref{eq:red}, the corresponding contribution is
\[
(1-a_{v,v-e_j})|u(v)|^2.
\]
Thus the total contribution from the three forward edges ending at $v$ is
\[
S_2(v)
:=
\sum_{j=1}^3
(1-a_{v,v-e_j})|u(v)|^2.
\]

Adding the outgoing and incoming contributions gives
\[
\begin{aligned}
S_1(v)+S_2(v)
&=
\sum_{j=1}^3
\left[
(1-a_{v,v+e_j})
+
(1-a_{v,v-e_j})
\right]|u(v)|^2
\\
&=
\left(
\sum_{y\sim v}(1-a_{vy})
\right)|u(v)|^2
\\
&=
W(v)|u(v)|^2.
\end{aligned}
\]
Since this holds for every vertex $v\in\mathbb Z^3$, summing over $v$
gives
\[
\begin{aligned}
&\sum_{(x,y)\in\mathcal E_+}
(1-a_{xy})|u(x)|^2
+
\sum_{(x,y)\in\mathcal E_+}
(1-a_{yx})|u(y)|^2
=
\sum_{v\in\mathbb Z^3}
W(v)|u(v)|^2.
\end{aligned}
\]
Consequently,
\[
\begin{aligned}
\mathcal E_3(u)
={}&
\sum_{x\in\mathbb Z^3}W(x)|u(x)|^2
+
\sum_{(x,y)\in\mathcal E_+}
\left|
a_{xy}^{1/2}u(x)
-
a_{xy}^{-1/2}u(y)
\right|^2.
\end{aligned}
\]
Rearranging gives \eqref{eq:edge-decomposition}.  Since the right-hand
side is nonnegative,
\[
\mathcal E_3(u)
\geq
\sum_{x\in\Z^3}W(x)|u(x)|^2.
\]
This completes the proof.
\end{proof}

\section{Proof of Proposition~\ref{prop:weight-bound}}
\label{sec:weight-proof}

To prove Proposition~\ref{prop:weight-bound}, we find an explicit expression for $W(x)$ defined by \eqref{eq:W-definition}. We then prove concavity, which allows us to find the one variable lower bound 
$$W(x)>\frac{1}{4|x|^2}.$$

Fix $x=(x_1,x_2,x_3)\in\Z^3\setminus\{0\}$ and write
$$n=|x|^2 = x_1^2 + x_2^2+x_3^2 .$$
For $y\sim x$, formula \eqref{eq:a-xy} gives
\begin{equation}
1-a_{xy}
=
\frac{2(|y|^2-|x|^2)}
{3|x|^2+5|y|^2+8}.
\label{eq:one-minus-a}
\end{equation}
For the two neighbors in the $j$-th coordinate direction, taking the regular Euclidean norm yields
\[
|x\pm e_j|^2=n+1\pm2x_j.
\]
Set
\begin{equation}
A=8n+13.
\label{eq:A-definition}
\end{equation}
Taking $y=x+e_j$ with $|y|^2 - |x|^2 = 1+2x_j$, allows to rewrite the denominator of \eqref{eq:one-minus-a} as follows
\[
3n+5(n+1+2x_j) + 8 = 8n + 13+ 10x_j = A + 10 x_j.
\]
Then \eqref{eq:one-minus-a} becomes
\[
1-a_{x,x+e_j}
=
\frac{2(1+2x_j)}{A+10x_j}
\quad \text{and}\quad 
1-a_{x,x-e_j}
=
\frac{2(1-2x_j)}{A-10x_j}.
\]
Pairing these terms, we obtain
\begin{equation}
\begin{aligned}
&(1-a_{x,x+e_j})+(1-a_{x,x-e_j})
=
\frac{4(A-20x_j^2)}{A^2-100x_j^2}.
\end{aligned}
\label{eq:paired-weight}
\end{equation}
Consequently, substituting  \eqref{eq:paired-weight} in  \eqref{eq:W-definition}, we get
\begin{equation}
W(x)
=
\sum_{j=1}^3
\frac{4(A-20x_j^2)}{A^2-100x_j^2}.
\label{eq:explicit-W}
\end{equation}

Using the above explicit formula for $W(x)$, our next step is to find a lower bound. 
To this end, fix $n\geq1$ and define
\begin{equation}
f_n(s)
=
\frac{4(A-20s)}{A^2-100s},
\quad 0\leq s\leq n,\quad A=8n+13.
\label{eq:f-n}
\end{equation}
The denominator in \eqref{eq:f-n} is positive, since
\begin{equation}
A^2-100s
\geq
A^2-100n
=
64n^2+108n+169
>0\quad \forall n\geq 1.
\label{eq:positive-denominator}
\end{equation}
A direct differentiation gives
\begin{equation*}
f_n''(s)
=
\frac{16000A(5-A)}{(A^2-100s)^3}.
\label{eq:f-second}
\end{equation*}
Since $A=8n+13>5$, it follows that
\begin{equation}
f_n''(s)<0,
\qquad 0\leq s\leq n.
\label{eq:f-concave}
\end{equation}
Thus $f_n$ is strictly concave  on $[0,n]$.\\

Set $s_j=x_j^2$ for $j=1,2,3$.  Then $s_j\geq0$ and
\[
s_1+s_2+s_3=n.
\]
Since the coefficients $s_j/n$ are nonnegative and sum to one,
\begin{equation}
(s_1,s_2,s_3)
=
\frac{s_1}{n}(n,0,0)
+
\frac{s_2}{n}(0,n,0)
+
\frac{s_3}{n}(0,0,n).
\label{eq:simplex-combination}
\end{equation}
Thus $(s_1,s_2,s_3)$ is a convex combination of the three vertices of
the simplex
\[
\Sigma_n
=
\{(r_1,r_2,r_3):r_j\geq0,\ r_1+r_2+r_3=n\}.
\]

Define
\[
F_n(r_1,r_2,r_3)
=
f_n(r_1)+f_n(r_2)+f_n(r_3).
\]
Since $f_n$ is concave on $[0,n]$, the function $F_n$ is concave on
$[0,n]^3$, and hence on $\Sigma_n$.

The triples arising from lattice points form only a subset of $\Sigma_n$,
since they have the special form
\[
(x_1^2,x_2^2,x_3^2).
\]
We work on the larger continuous simplex; any lower bound obtained there
therefore applies in particular to the lattice triples.

Applying concavity of $F_n$ to \eqref{eq:simplex-combination} gives
\[
\begin{aligned}
F_n(s_1,s_2,s_3)
\geq{}&
\frac{s_1}{n}F_n(n,0,0)
+
\frac{s_2}{n}F_n(0,n,0)
+
\frac{s_3}{n}F_n(0,0,n).
\end{aligned}
\]
Since $F_n$ is symmetric in its three variables,
\[
F_n(n,0,0)
=
F_n(0,n,0)
=
F_n(0,0,n).
\]
Therefore,
\begin{equation}
\begin{aligned}
F_n(s_1,s_2,s_3)
&\geq
\left(
\frac{s_1+s_2+s_3}{n}
\right)F_n(n,0,0)
\\
&=
F_n(n,0,0),
\end{aligned}
\label{eq:Fn}
\end{equation}
where we used $s_1+s_2+s_3=n$ in the last equality.

Thus we have
\[
W(x)=\sum_{j=1}^3
\frac{4(A-20x_j^2)}{A^2-100x_j^2} 
= f_n(s_1)+f_n(s_2)+f_n(s_3) = F_n(s_1,s_2,s_3).
\]
By \eqref{eq:Fn} it follows
\begin{equation}
\begin{aligned}
W(x)
&\geq F_n(n,0,0)
=f_n(n)+2f_n(0).
\end{aligned}
\label{eq:axis-minimum}
\end{equation}
Using \eqref{eq:f-n} and $A=8n+13$, we have
\begin{equation}
\begin{aligned}
W(x)
\geq  f_n(n)+2f_n(0)
&=
\frac{4(A-20n)}{A^2-100n}+\frac8A
\\
&=
\frac{4(32n^2+164n+507)}
{(8n+13)(64n^2+108n+169)}.
\end{aligned}
\label{eq:axis-value}
\end{equation}

Subtracting $1/(4n)$ from the lower bound on the right-hand side of
\eqref{eq:axis-value}, we obtain
\begin{equation}
\begin{aligned}
&f_n(n)+2f_n(0)-\frac1{4n}
=
\frac{928n^2+5356n-2197}
{4n(8n+13)(64n^2+108n+169)}.
\end{aligned}
\label{eq:positive-rational-remainder}
\end{equation}
The denominator is positive. Moreover, since $n\geq1$ 
\[
928n^2+5356n-2197
\geq
928+5356-2197
=4087>0.
\]
By the substituting definition of $\rho$ in \eqref{eq:rho-definition} into \eqref{eq:positive-rational-remainder} , we have
\[
f_n(n)+2f_n(0)-\frac1{4n} = \rho(n)>0.
\]
Since $ n=|x|^2$, combining the above with \eqref{eq:axis-minimum}, and rearranging we get
\[
W(x)
\geq
\frac1{4|x|^2}
+
\rho(|x|^2),
\]
which proves \eqref{eq:W-rho-bound}. Since $\rho(n)>0$, \eqref{eq:W-bound-squared} follows.\\

It remains to evaluate the weight at the origin.  If $x=0$ and $y\sim0$,
then $|y|^2=1$,
\[
a_{0y}
=
\frac{11}{13}.
\]
There are six neighbors, and therefore
\[
W(0)
=6\left(1-\frac{11}{13}\right)
=\frac{12}{13}.
\]
This proves Proposition~\ref{prop:weight-bound}.

\begin{remark}[Individual edge contributions]\label{rmk:edgesum}
\textit{For illustration, take $x=e_1$.  Its neighbor $0$ gives
$
1-a_{e_1,0}
=
1-\frac{13}{11}
=
-\frac{2}{11},$
while the neighbor $2e_1$ gives
$
1-a_{e_1,2e_1}
=
1-\frac{25}{31}
=
\frac{6}{31}.
$
Each of the four remaining neighbors has squared norm $2$ and contributes
$
1-\frac{19}{21}
=
\frac{2}{21}.
$
Consequently,
\[
W(e_1)
=
-\frac{2}{11}
+\frac{6}{31}
+4\frac{2}{21}
=
\frac{2812}{7161}
>
\frac14.
\]
Thus the individual edge contributions in the definition of $W$ need not
be positive.  The lower bound arises only after the contributions from all
six lattice directions are combined.}
\end{remark}

\section{Proof of Proposition~\ref{prop:sharpness}}
\label{sec:sharpness-proof}

We first construct an explicit family of near-extremizers for the continuous
three-dimensional Hardy inequality.  Fix a nonzero real-valued function
\[
\phi\in C_c^\infty((0,1);\R).
\]
For $L>1$, define
\begin{equation}
\varphi_L(x)
=
|x|^{-1/2}
\phi\left(\frac{\log|x|}{L}\right),
\qquad x\in\R^3\setminus\{0\}.
\label{eq:phi-L}
\end{equation}
Since $\phi$ has compact support contained in $(0,1)$, the function
$\varphi_L$ is smooth, compactly supported in an annulus separated from the
origin, and vanishes in a neighborhood of the origin.  We extend it by
setting $\varphi_L(0)=0$; the extension belongs to $C_c^\infty(\R^3)$.

For a radial function of the form
\[
\varphi(r)=r^{-1/2}g(\log r),
\]
the change of variables $t=\log r$ gives
\begin{equation}
\int_{\R^3}\frac{|\varphi(x)|^2}{|x|^2}\dd x
=
4\pi\int_{\R}|g(t)|^2\dd t
\label{eq:radial-Hardy-mass}
\end{equation}
and
\begin{equation}
\int_{\R^3}|\nabla\varphi(x)|^2\dd x
=
4\pi\int_{\R}
\left|g'(t)-\frac12g(t)\right|^2\dd t.
\label{eq:radial-energy}
\end{equation}
For \eqref{eq:phi-L}, we have
\[
g(t)=\phi(t/L).
\]
The mixed term in \eqref{eq:radial-energy} vanishes because
\[
\operatorname{Re}\int_{\R}g'(t)\overline{g(t)}\dd t
=
\frac12\int_{\R}(|g(t)|^2)'\dd t
=0.
\]
After the change of variables $s=t/L$, formulas
\eqref{eq:radial-Hardy-mass} and \eqref{eq:radial-energy} give
\begin{equation}
\int_{\R^3}\frac{|\varphi_L(x)|^2}{|x|^2}\dd x
=
4\pi L\int_0^1|\phi(s)|^2\dd s
\label{eq:phi-L-mass}
\end{equation}
and
\begin{equation}
\int_{\R^3}|\nabla\varphi_L(x)|^2\dd x
=
4\pi L
\left(
\frac14\int_0^1|\phi(s)|^2\dd s
+
\frac1{L^2}\int_0^1|\phi'(s)|^2\dd s
\right).
\label{eq:phi-L-energy}
\end{equation}
Hence
\begin{equation}
\frac{\displaystyle\int_{\R^3}|\nabla\varphi_L|^2\dd x}
{\displaystyle\int_{\R^3}|\varphi_L|^2/|x|^2\dd x}
=
\frac14
+
\frac1{L^2}
\frac{\displaystyle\int_0^1|\phi'(s)|^2\dd s}
{\displaystyle\int_0^1|\phi(s)|^2\dd s}.
\label{eq:continuous-near-extremizer}
\end{equation}
The right-hand side tends to $1/4$ as $L\to\infty$.

We now pass from the continuous functions to the lattice.  For fixed $L$ and
$N\in\mathbb N$, set
\begin{equation}
u_{L,N}(x)
=
\varphi_L(x/N),
\qquad x\in\Z^3.
\label{eq:u-LN}
\end{equation}
The function $u_{L,N}$ is finitely supported and satisfies $u_{L,N}(0)=0$.
For each fixed $L$, there is a compact set $K_L$, depending on $L$ but not
on $N$, that contains the supports of all the difference quotients below.
For each coordinate direction,
\[
N\left(
\varphi_L(y+e_j/N)-\varphi_L(y)
\right)
\longrightarrow
\partial_j\varphi_L(y)
\]
uniformly on $K_L$.  Therefore
\begin{equation}
\begin{aligned}
\frac{\E_3(u_{L,N})}{N}
&=
\frac1{N^3}
\sum_{x\in\Z^3}\sum_{j=1}^3
\left|
N\left(
\varphi_L((x+e_j)/N)-\varphi_L(x/N)
\right)
\right|^2
\\
&\longrightarrow
\int_{\R^3}|\nabla\varphi_L(y)|^2\dd y.
\end{aligned}
\label{eq:energy-Riemann-sum}
\end{equation}
Likewise,
\begin{equation}
\begin{aligned}
\frac1N
\sum_{x\neq0}\frac{|u_{L,N}(x)|^2}{|x|^2}
&=
\frac1{N^3}
\sum_{x\neq0}
\frac{|\varphi_L(x/N)|^2}{|x/N|^2}
\longrightarrow
\int_{\R^3}\frac{|\varphi_L(y)|^2}{|y|^2}\dd y.
\end{aligned}
\label{eq:mass-Riemann-sum}
\end{equation}
There is no singularity in this Riemann sum because $\varphi_L$ vanishes in a
neighborhood of the origin. 
The limiting Hardy mass is strictly positive
because $\varphi_L\not\equiv0$.  Moreover, $u_{L,N}\not\equiv0$ for all
sufficiently large $N$: one may choose a point at which $\varphi_L$ is
nonzero and approximate it by points of $N^{-1}\Z^3$.  It follows from
\eqref{eq:energy-Riemann-sum} and \eqref{eq:mass-Riemann-sum} that, for each
fixed $L$,
\begin{equation}
\frac{\E_3(u_{L,N})}
{\displaystyle\sum_{x\neq0}|u_{L,N}(x)|^2/|x|^2}
\longrightarrow
\frac{\displaystyle\int_{\R^3}|\nabla\varphi_L|^2\dd y}
{\displaystyle\int_{\R^3}|\varphi_L(y)|^2/|y|^2\dd y}
\qquad(N\to\infty).
\label{eq:quotient-Riemann-limit}
\end{equation}

For each integer $L\geq2$, choose $N(L)$ sufficiently large that
$u_{L,N(L)}\not\equiv0$ and its lattice Rayleigh quotient differs from the
continuous quotient in \eqref{eq:continuous-near-extremizer} by at most
$L^{-2}$.  Then
\[
u_L:=u_{L,N(L)}
\]
is a sequence of nonzero finitely supported lattice functions with
$u_L(0)=0$, and
\[
\frac{\E_3(u_L)}
{\displaystyle\sum_{x\neq0}|u_L(x)|^2/|x|^2}
\longrightarrow
\frac14.
\]
This proves Proposition~\ref{prop:sharpness}.

\section{Proof of Theorem~\ref{thm:main} and Corollary~\ref{cor:sharp-constant}}
\label{sec:main-proof}

\begin{proof}[Proof of Theorem~\ref{thm:main}]
Combining the exact decomposition \eqref{eq:edge-decomposition} with the
Hardy term gives
\begin{equation}
\begin{aligned}
&\mathcal E_3(u)
-
\frac14\sum_{x\neq0}\frac{|u(x)|^2}{|x|^2}
\\
&\qquad=
\sum_{(x,y)\in\mathcal E_+}
\left|
a_{xy}^{1/2}u(x)-a_{xy}^{-1/2}u(y)
\right|^2
+
W(0)|u(0)|^2
\\
&\qquad\quad+
\sum_{x\neq0}
\left(
W(x)-\frac1{4|x|^2}
\right)|u(x)|^2.
\end{aligned}
\label{eq:main-remainder-decomposition}
\end{equation}
By Proposition~\ref{prop:weight-bound},
$
W(0)=\frac{12}{13}
$
and, for $x\neq0$,
\[
W(x)-\frac1{4|x|^2}
\geq
\rho(|x|^2).
\]
Dropping the nonnegative edge-square term in
\eqref{eq:main-remainder-decomposition} proves
\eqref{eq:explicit-remainder}.  Since $\rho(n)>0$ for every integer
$n\geq1$, the right-hand side of \eqref{eq:explicit-remainder} is strictly
positive whenever $u\not\equiv0$.
\end{proof}

\begin{proof}[Proof of Corollary~\ref{cor:sharp-constant}]
Theorem~\ref{thm:main} immediately gives \eqref{eq:main-hardy}, and the
inequality is strict for every nonzero finitely supported function.  In
particular, applying it to functions in $\mathcal D_0(\Z^3)$ gives
$\lambda_3\geq1/4$.

Proposition~\ref{prop:sharpness} gives a sequence of admissible functions
whose Rayleigh quotients tend to $1/4$.  Hence $\lambda_3\leq1/4$, and
therefore
\[
\lambda_3=\frac14.
\]
Thus the coefficient $1/4$ is sharp and is not attained by a nonzero
finitely supported function.  
\end{proof}

\begin{remark}[Further problems]
It remains to determine exact pure-power constants in fixed dimensions
$d\geq4$ and for nonlinear discrete $p$-energies.  It would also be useful
to classify all minimizing sequences for Corollary~\ref{cor:sharp-constant}
and to understand whether the edge-field construction has a natural
criticality interpretation on more general graphs.
\end{remark}

\appendix
\section{Motivation for the edge field}
\label{appendix:a_xy}

We explain the choice of the edge field in \eqref{eq:a-xy}.  The starting
point is the ground-state picture for the classical Hardy inequality.  In
dimension $d$, the positive radial function
\[
h_d(x)=|x|^{-(d-2)/2}
\]
satisfies
\[
-\Delta h_d
=
\frac{(d-2)^2}{4|x|^2}h_d
\qquad\text{on }\R^d\setminus\{0\}.
\]
Indeed, for a radial power $|x|^\alpha$ one has
\[
\Delta |x|^\alpha
=
\alpha(\alpha+d-2)|x|^{\alpha-2},
\]
and taking $\alpha=-(d-2)/2$ gives the identity above.  Thus $h_d$ is the
positive zero-energy solution for the critical Hardy operator
\[
-\Delta-\frac{(d-2)^2}{4|x|^2}.
\]
It is naturally viewed as a formal or generalized ground state, since it is
not itself an admissible extremizer for the Hardy inequality.  Ground-state
representations of this type are standard in the theory of Hardy
inequalities and critical Schr\"odinger operators; see
\cite{FrankSeiringer2008,PinchoverTintarev2006}.  In the discrete setting,
related ground-state transforms and Hardy weights on graphs are discussed in
\cite{KellerPinchoverPogorzelski2018}.

In dimension $d=3$, this becomes
\[
h(x)=|x|^{-1/2},
\qquad
-\Delta h
=
\frac{1}{4|x|^2}h.
\]
For a nearest-neighbor edge $x\sim y$ with $x,y\neq0$, write
\[
n=|x|^2,
\qquad
m=|y|^2,
\qquad
\delta=m-n.
\]
Then
\[
\frac{h(y)}{h(x)}
=
\left(\frac{n}{m}\right)^{1/4}
=
\left(1+\frac{\delta}{n}\right)^{-1/4}.
\]
Since $\delta=O(|x|)=O(n^{1/2})$ along nearest-neighbor edges,
\begin{equation}
\frac{h(y)}{h(x)}
=
1-\frac{\delta}{4n}
+
O\left(\frac1n\right),
\qquad |x|\to\infty.
\label{eq:appendix-ground-ratio}
\end{equation}

We therefore look for an edge factor with the same first-order behavior
and with the exact reciprocal property
\[
a_{yx}=a_{xy}^{-1}.
\]
A convenient way to impose reciprocity is to use a Cayley-type expression.
This leads to the one-parameter family
\begin{equation}
a_{xy}^{(c)}
=
\frac{n+m+c-\frac14(m-n)}
     {n+m+c+\frac14(m-n)}
=
\frac{5n+3m+4c}
     {3n+5m+4c},
\qquad c\geq0.
\label{eq:appendix-family}
\end{equation}
The numerator and denominator are positive on every nearest-neighbor edge.
Moreover, interchanging $x$ and $y$ leaves $n+m+c$ unchanged and reverses
the sign of $m-n$, so
\begin{equation}
a_{yx}^{(c)}
=
\bigl(a_{xy}^{(c)}\bigr)^{-1}.
\label{eq:appendix-reciprocity-family}
\end{equation}

The parameter $c$ affects only lower-order terms.  Indeed, if
$S_c=n+m+c$, then
\[
\begin{aligned}
a_{xy}^{(c)}
&=
\frac{S_c-\delta/4}{S_c+\delta/4}
=
1-\frac{\delta}{2S_c}
+
O\left(\frac{\delta^2}{S_c^2}\right)
=
1-\frac{m-n}{4n}
+
O\left(\frac1n\right),
\end{aligned}
\]
uniformly along nearest-neighbor edges as $|x|\to\infty$.  Thus every fixed
$c\geq0$ has the same first-order behavior as the ground-state ratio in
\eqref{eq:appendix-ground-ratio}.  The asymptotic matching determines the
coefficient $1/4$, but not the lower-order parameter $c$.

To choose $c$, consider the associated vertex weight
\[
W_c(x)
=
\sum_{y\sim x}\bigl(1-a_{xy}^{(c)}\bigr).
\]
Repeating the pairing and simplex-concavity argument from
Proposition~\ref{prop:weight-bound}, with
\[
A_c=8n+5+4c,
\]
gives the radial lower bound
\begin{equation}
W_c(x)
\geq
B_c(n)
:=
\frac{4(A_c-20n)}{A_c^2-100n}
+
\frac8{A_c}.
\label{eq:appendix-axis-family}
\end{equation}
Expanding this expression for large $n$ gives
\begin{equation}
B_c(n)-\frac1{4n}
=
\frac{32c-35}{64n^2}
+
O\left(\frac1{n^3}\right).
\label{eq:appendix-family-remainder-asymptotic}
\end{equation}
Thus $c>35/32$ is necessary for this lower bound to have a positive
inverse-fourth leading correction.  This asymptotic condition alone does
not guarantee positivity for every finite $n$.

We choose the simple value $c=2$.  In this case the exact difference in
\eqref{eq:appendix-axis-family} is
\[
B_2(n)-\frac1{4n}
=
\frac{928n^2+5356n-2197}
{4n(8n+13)(64n^2+108n+169)}
=
\rho(n),
\]
which is positive for every integer $n\geq1$ by
Proposition~\ref{prop:weight-bound}.  Thus $c=2$ gives the global positive
remainder used in the proof.  We do not claim that this choice is unique.

Finally, for $c=2$, \eqref{eq:appendix-family} becomes
\[
a_{xy}^{(2)}
=
\frac{5|x|^2+3|y|^2+8}
     {3|x|^2+5|y|^2+8},
\]
which is exactly the edge field $a_{xy}$ used in the paper.

The construction does not require these edge factors to arise from a positive function on the vertices.  If
\[
a_{xy}=\frac{H(y)}{H(x)}
\]
for some positive function $H$, then the product of the edge factors around every closed lattice path would necessarily equal one. 
We do not impose
this additional compatibility condition.  
Consider the closed path
\[
\begin{aligned}
x_0&=0,
&
x_1&=e_1,
&
x_2&=2e_1,
\\
x_3&=2e_1+e_2,
&
x_4&=e_1+e_2,
&
x_5&=e_2,
&
x_6&=x_0.
\end{aligned}
\]
For the field in \eqref{eq:a-xy}, direct substitution gives
\[
\begin{aligned}
\prod_{k=0}^{5}a_{x_kx_{k+1}}
&=
\frac{11}{13}
\frac{25}{31}
\frac{43}{45}
\frac{13}{11}
\frac{21}{19}
\frac{13}{11}
=
\frac{19565}{19437}
\neq1.
\end{aligned}
\]
Thus there is no positive function $H$ satisfying
$a_{xy}=H(y)/H(x)$ on every edge.  In this sense the field is genuinely
non-gradient.  The proof of the Hardy inequality does not require gradient
compatibility; it uses only positivity, reciprocity, and the global lower
bound for the associated vertex weight.

\end{document}